\documentclass[12pt]{article}

\usepackage{amssymb,amsfonts,amsthm,fullpage}

\newtheorem{theorem}{Theorem}[section]
\newtheorem{lemma}[theorem]{Lemma}

\newtheorem{cor}[theorem]{Corollary}
\newtheorem{conj}[theorem]{Conjecture}

\newtheorem{remark}[theorem]{Remark}


\newcommand{\tr}{\mathrm{Tr}}
\newcommand{\C}{\mathbb{C}}
\newcommand{\R}{\mathbb{R}}

\begin{document}

\title{New spectral bounds on the chromatic number encompassing all eigenvalues of the adjacency matrix}
\author{Pawel Wocjan\thanks{Mathematics Department \& Center for Theoretical Physics, Massachusetts Institute of Technology, Boston, USA; on sabbatical leave from Department of Electrical Engineering and Computer Science, University of Central Florida, Orlando, USA; \texttt{wocjan@eecs.ucf.edu}}\quad\quad Clive Elphick\thanks{\texttt{clive.elphick@gmail.com}}}


\date{September 14, 2012}

\maketitle

\begin{abstract}
The purpose of this article is to improve existing lower bounds on the chromatic number $\chi$.  Let $\mu_1,\ldots,\mu_n$ be the eigenvalues of the adjacency matrix sorted in non-increasing order.  

First, we prove the lower bound $\chi \ge 1 + \max_m\{\sum_{i=1}^m \mu_i / -\sum_{i=1}^m \mu_{n - i +1}\}$ for $m=1,\ldots,n-1$.  This generalizes the {\sc Hoffman} lower bound which only involves the maximum and minimum eigenvalues, i.e., the case $m=1$.  We provide several examples for which the new bound exceeds the {\sc Hoffman} lower bound.

Second, we conjecture the lower bound $\chi \ge 1 + S^+ / S^-$, where $S^+$ and $S^-$ are the sums of the squares of positive and negative eigenvalues, respectively.  To corroborate this conjecture, we prove the weaker bound $\chi \ge S^+/S^-$.  We show that the conjectured lower bound is tight for several families of graphs.  We also performed various searches for a counter-example, but none was found.   

Our proofs rely on a new technique of converting the adjacency matrix into the zero matrix by conjugating with unitary matrices and use majorization of spectra of self-adjoint matrices.

We also show that the above bounds are actually lower bounds on the normalized orthogonal rank of a graph, which is always less than or equal to the chromatic number.  The normalized orthogonal rank is the minimum dimension making it possible to assign vectors with entries of modulus one to the vertices such that two such vectors are orthogonal if the corresponding vertices are connected.  

All these bounds are also valid when we replace the adjacency matrix $A$ by  $W * A$ where $W$ is an arbitrary self-adjoint matrix and $*$ denotes the Schur product, that is, entrywise product of $W$ and $A$.
\end{abstract}


\section{Introduction}

Spectral graph theory brings together two apparently unrelated branches of mathematics -- linear algebra and graph theory. Its major goal is to investigate the structure of graphs using the spectra of various matrices associated with graphs \cite{Biggs93,BH12,CRS10,GR01}.

This spectral approach to studying the structure of graphs has its limitations. For example, there exist non-isomorphic graphs that are co-spectral, that is, the eigenvalues of the adjacency matrices of the graphs are the same.  This demonstrates that the spectrum of a graph alone can only provide partial information on the structure of graphs.  
Nevertheless spectral graph theory plays an important role in graph theory. Sometimes a spectral approach provides a novel proof of a familiar result, for example as discussed below for Tur\'an's Theorem.  In other cases a spectral approach leads to results for which no non-spectral proof is known, for example the rarity of Moore graphs. Spectral methods are especially powerful when graphs have symmetry properties. For example, a graph is strongly regular if and only if it has precisely three distinct eigenvalues. 

Spectral graph theory has applications in chemistry, network design, coding theory, computer science and, as described below, in quantum information theory. It even helped Larry Page of Google to become a billionaire, when he patented his PageRank algorithm, which uses the Perron-Frobenius eigenvector of the web graph 
\cite{BL06}.

Determining, and even approximating, the chromatic number of a graph is NP-hard \cite{GJ78,Hastad99}, so attention has focussed on upper and lower bounds. Upper bounds involve finding a coloring but lower bounds are more subtle, because they are based on demonstrating that no coloring exists for a given number of colors. The eigenvalues of a graph provide information about the whole graph whereas degrees provide information about individual vertices. As a result the best known lower bounds for the chromatic number are spectral \cite{Hoffman70}, and in this paper we improve these bounds by incorporating all eigenvalues. We also conjecture a relationship between the sign of the eigenvalues and the chromatic number, which if true could lead to further developments in spectral graph theory. 

Some of the ideas and techniques used in the proofs of these new bounds originated in the context of quantum information theory. We briefly explain below that our new technique of converting the adjacency matrix into the zero matrix by conjugating with unitary matrices is an abstraction of a control-theoretic problem studied in quantum information theory \cite{Wocjan03}.

Examples of matrices associated to graphs are the adjacency matrix, the Laplacian, and the signless Laplacian.  In this paper, we focus on the adjacency matrix.  
Given an undirected graph $G$ with vertex set $V=\{1,\ldots,n\}$ and edge set $E$, the adjacency matrix of $G$ is a matrix $A = (a_{k\ell})$ of size $n$ given by
\[
a_{k\ell} = \left\{
\begin{array}{ll} 
1 & \mbox{if } (k,\ell) \in E \\
0 & \mbox{otherwise.}
\end{array}
\right.
\]
The adjacency matrix $A$ is symmetric, and much is known about the spectra of such
matrices. For instance, the eigenvalues of $A$ are real numbers, which we denote by
$\mu_1,\ldots,\mu_n$, sorted in non-increasing order. 


The {\sc Hoffman} lower bound on the chromatic number \cite{Hoffman70}   
\[
\chi \ge 1 + \frac{\mu_1}{-\mu_n}
\]
is one of the best known results in spectral graph theory.  
There are several tests that a new lower bound for the chromatic number should satisfy to be of interest. These are that the bound:
\begin{itemize}
\item is exact for some class(es) of graphs;
\item exceeds the clique number for some graphs; and
\item exceeds the Hoffman lower bound for the chromatic number for some graphs.
\end{itemize}
A different sort of test is how well it performs for random graphs. 

We prove a new lower bound that generalizes the {\sc Hoffman} lower bound and conjecture a new lower bound that satisfy the above tests.


\begin{theorem}\label{thm:genHoffman}
The chromatic number is bounded from below by
\[
\chi \ge 1 + \max_{m=1,\ldots,n-1} \left\{ \frac{\sum_{i=1}^m \mu_i}{-\sum_{i=1}^m \mu_{n-i+1}} \right\}.
\]
\end{theorem}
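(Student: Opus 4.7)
The strategy is to encode a proper $\chi$-coloring as a family of unitary conjugations of $A$ whose sum vanishes, and then to apply Ky Fan's super-additivity of partial eigenvalue sums.

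First, set $k=\chi$ and fix a proper coloring $c\colon V\to\{0,1,\ldots,k-1\}$. Define diagonal unitary matrices $U_0,\ldots,U_{k-1}$ by $(U_j)_{vv}=\omega^{j\,c(v)}$ with $\omega=e^{2\pi i/k}$. A direct computation gives $(U_j A U_j^{*})_{uv}=\omega^{j(c(u)-c(v))}\,a_{uv}$, so summing the geometric character sum over $j\in\{0,1,\ldots,k-1\}$ yields $k$ times the ``same-color part'' of $A$. That part vanishes because the coloring is proper, producing the key identity
\[
A \;=\; -\sum_{j=1}^{k-1} U_j A U_j^{*},
\]
in which each of the $k-1$ summands on the right is unitarily similar to $A$ and therefore has the spectrum $\mu_1\ge\cdots\ge\mu_n$.

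Second, by Ky Fan's minimum principle, $\sum_{i=1}^m \lambda_{n-i+1}(B)=\min_{P^{*}P=I_m}\tr(P^{*}BP)$ for Hermitian $B$, and this functional is super-additive in $B$. Applied to $S:=\sum_{j=1}^{k-1}U_j A U_j^{*}=-A$, and using $\lambda_{n-i+1}(S)=-\lambda_i(A)=-\mu_i$, this yields
\[
-\sum_{i=1}^m \mu_i \;\ge\; (k-1)\sum_{i=1}^m \mu_{n-i+1}
\]
for each $m=1,\ldots,n-1$. Dividing by the positive quantity $-\sum_{i=1}^m \mu_{n-i+1}$ and adding $1$ gives $\chi\ge 1+\frac{\sum_{i=1}^m \mu_i}{-\sum_{i=1}^m \mu_{n-i+1}}$, and maximizing over $m$ completes the argument.

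The substantive new ingredient is the unitary identity in the first step; once that is in place, the rest is standard majorization. The main obstacle I anticipate is formalizing this first step cleanly, namely identifying the group of $k$-th roots of unity as precisely the right index set so that proper colorings are exactly what makes the character sum annihilate $A$. A secondary care-point is to verify that the denominator $-\sum_{i=1}^m \mu_{n-i+1}$ is strictly positive throughout the relevant range of $m$, which holds whenever $G$ has at least one edge since then $\mu_1>0$ and $\tr(A)=0$.
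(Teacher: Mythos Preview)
Your proposal is correct and follows essentially the same route as the paper. The paper constructs the same diagonal unitaries from a proper coloring via roots of unity (its Theorem~\ref{thm:conversion} and Corollary~\ref{cor:conversion}) to obtain the identity $A=\sum_{s=1}^{\chi-1}U_s(-A)U_s^\dagger$, and then invokes the majorization inequality $\mu^\downarrow(A+B)\prec\mu^\downarrow(A)+\mu^\downarrow(B)$ from Bhatia; your use of Ky~Fan's variational principle for the bottom-$m$ eigenvalue sum is exactly the standard way to prove that majorization statement, so the two arguments coincide up to phrasing.
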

This bound reduces to the Hoffman bound when restricted to $m=1$.

To formulate our conjectured lower bound on the chromatic number, we need to introduce some further notation. The inertia of $A$ is the ordered triple $(\pi,\nu,\delta)$, where $\pi$, $\nu$ and $\delta$ are the numbers (counting multiplicities) of positive, negative and zero eigenvalues of $A$ respectively.   Let 
\[
S^+=\mu_1^2+\ldots+\mu_\pi^2.
\]
Let $m=|E|$ denote the number of edges.  Since $\sum_{i=1}^n \mu_i^2 = \tr(A^2) = 2m$, it follows that 
\[
S^-=\mu_{n-\nu+1}^2+\ldots +\mu_n^2 = 2m- S^+.
\]


\begin{conj}\label{conj}
The chromatic number is bounded from below by
\[
\chi \ge 1 + \frac{S^+}{S^-}.
\] 
\end{conj}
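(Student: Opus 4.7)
The plan is to adapt the unitary conjugation technique that underlies the paper's proofs. Assume $\chi = k$, fix a proper $k$-coloring with color function $c\colon V \to \{1,\ldots,k\}$, let $P_s$ denote the diagonal projection onto the color class $V_s$, and set $\omega = e^{2\pi i/k}$. Define the $k$ diagonal unitaries $U_j = \sum_{s=1}^k \omega^{(j-1)(s-1)} P_s$. A direct computation gives $(U_j A U_j^*)_{uv} = \omega^{(j-1)(c(u)-c(v))} A_{uv}$, so $\sum_{j=1}^k U_j A U_j^* = 0$ entrywise: the geometric sum in $j$ vanishes unless $c(u) = c(v)$, in which case $A_{uv} = 0$ because the coloring is proper. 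Splitting $A = A^+ - A^-$ into positive and negative semidefinite parts (which satisfy $A^+ A^- = 0$), this rearranges to $\sum_{j=1}^k U_j A^+ U_j^* = \sum_{j=1}^k U_j A^- U_j^*$.

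Normalizing by $U_1 = I$, I multiply this identity by $A^+$ and by $A^-$ and take traces, using $\tr(A^+ A^-) = 0$, to obtain
\[
S^+ + \sum_{j=2}^k \tr(A^+ U_j A^+ U_j^*) = \sum_{j=2}^k \tr(A^+ U_j A^- U_j^*) = S^- + \sum_{j=2}^k \tr(A^- U_j A^- U_j^*),
\]
where the final equality uses that the family $\{U_j\}$ is closed under conjugation up to reindexing. Hence
\[
S^+ - S^- = \sum_{j=2}^k \tr(A^- U_j A^- U_j^*) - \sum_{j=2}^k \tr(A^+ U_j A^+ U_j^*).
\]
The weaker bound $\chi \ge S^+/S^-$ follows by dropping the nonnegative subtracted sum and applying Cauchy--Schwarz in the Hilbert--Schmidt inner product to each remaining term, which gives $\tr(A^- U_j A^- U_j^*) \le S^-$ and therefore $S^+ - S^- \le (k-1)S^-$.

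To upgrade to the conjectured $S^+ \le (k-1)S^-$, one must save one further factor of $S^-$. My plan is to exploit the extra structure discarded above: with the coloring unitaries, the common matrix $M := \sum_j U_j A^\pm U_j^*$ satisfies $M_{uv} = k A^+_{uv} = k A^-_{uv}$ on same-color pairs and $M_{uv} = 0$ otherwise, so $A^+$ and $A^-$ agree entrywise on every diagonal block of the color partition. Combining this rigidity with $A^+ A^- = 0$ and the global identity $\sum_{u,v} |A_{uv}|^2 = 2m = S^+ + S^-$ should couple the Frobenius norms of the off-diagonal parts of $A^+$ and $A^-$. I would then try to leverage this coupling to sharpen the uniform Cauchy--Schwarz bound $\tr(A^- U_j A^- U_j^*) \le S^-$ to the stronger aggregate bound $\sum_{j \ge 2} \tr(A^- U_j A^- U_j^*) \le (k-2)S^-$, via a refined majorization sensitive to which entries sit in same-color blocks.

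The main obstacle, and plausibly the reason this remains a conjecture, is that block-level rigidity does not obviously promote to a global spectral inequality controlling $S^+/S^-$. On $K_n$ each term $\tr(A^- U_j A^- U_j^*)$ equals $n-2$, which is exactly the average $S^-(k-2)/(k-1)$ the conjecture demands, whereas the uniform bound $\le S^- = n-1$ is strictly loose by one per term; so a successful proof must exploit the coloring structure of the $U_j$ rather than treat them as arbitrary unitaries. Failing a clean majorization argument, I would try either a tensor-product doubling of the construction to induce cancellation between cross terms, or a semidefinite programming dual of $\chi$ that produces an explicit matrix witness. The tightness of the conjecture on $K_n$, $K_{m,n}$, and stars strongly suggests that the extremal examples should dictate the correct intermediate inequality.
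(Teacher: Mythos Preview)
The statement you are attempting is a \emph{conjecture}: the paper explicitly states that it has not been proved, and provides only the weaker bound $\chi \ge S^+/S^-$ (Theorem~\ref{thm:weaker}) together with supporting evidence. So there is no proof in the paper to compare against, and your proposal should be read as an attempt to go beyond what the paper achieves.

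Your derivation of the weaker bound is correct and is essentially a repackaging of the paper's own argument. The paper multiplies the identity $\sum_s U_s(-A)U_s^\dagger = A$ by $|A|=A^++A^-$ and bounds $\tr(|A|U_s(-A)U_s^\dagger)$ via a doubly-stochastic matrix argument; your version multiplies separately by $A^+$ and $A^-$ and uses the Hilbert--Schmidt Cauchy--Schwarz inequality $\tr(A^- U_j A^- U_j^*)\le S^-$. The two are equivalent once one notes $\tr(|A|A)=S^+-S^-$ and that the ``closed under conjugation'' reindexing you invoke is exactly the paper's symmetrization $\frac12\big(\tr(|A|U(-A)U^\dagger)+\tr(|A|U^\dagger(-A)U)\big)$ in Corollary~\ref{cor:selfadjoint}.

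Your plan for the upgrade, however, runs into the same wall the paper already identifies. In your language, the extra saving must come from the dropped term $\sum_{j\ge 2}\tr(A^+U_jA^+U_j^*)$; in the paper's language it is the term $-\sum_s x^T P C_s P x$. The paper's Remark after the proof of Theorem~\ref{thm:weakerW} observes that for $K_n$ this term is \emph{exactly zero}, so no slack is available from it --- which is precisely your own computation that on $K_n$ each $\tr(A^-U_jA^-U_j^*)=n-2$ forces equality in the target inequality. Hence the ``refined majorization'' you hope for cannot be obtained by merely keeping the discarded nonnegative term; any proof must inject genuinely new information about the coloring unitaries beyond what the identity $\sum_j U_j A U_j^*=0$ provides. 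Your concluding paragraph acknowledges this, and it matches the paper's assessment that the current technique is insufficient.
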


While we were not able to prove the above conjectured lower bound, we can prove the following weaker bound.


\begin{theorem}\label{thm:weaker}
The chromatic number is bounded from below by 
\[
\chi \ge \frac{S^+}{S^-}.
\]
\end{theorem}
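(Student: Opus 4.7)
The plan is to leverage a proper $\chi$-coloring to construct diagonal unitary matrices that collapse $A$ to zero upon averaged conjugation, and then exploit the positive/negative part decomposition of $A$ together with two Frobenius-norm bounds on the resulting matrix.

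Set $k = \chi$ and fix a proper coloring $c : V \to \{0, 1, \ldots, k-1\}$. With $\omega = e^{2\pi i/k}$, define diagonal unitaries $D_0, \ldots, D_{k-1}$ by $(D_j)_{vv} = \omega^{j c(v)}$. A direct entrywise check shows $\sum_{j=0}^{k-1} D_j A D_j^* = 0$: whenever $c(u) \neq c(v)$ the geometric sum $\sum_j \omega^{j(c(u)-c(v))}$ vanishes, while if $c(u) = c(v)$ then $A_{uv} = 0$ because the coloring is proper.

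Next, decompose $A = A^+ - A^-$ into its positive and negative parts, so that $A^+, A^- \succeq 0$, $A^+ A^- = 0$, $\tr((A^+)^2) = S^+$, and $\tr((A^-)^2) = S^-$. The identity above rearranges to
\[
C \;:=\; \sum_{j=0}^{k-1} D_j A^+ D_j^* \;=\; \sum_{j=0}^{k-1} D_j A^- D_j^*.
\]
I then bound $\tr(C^2) = \|C\|_F^2$ in two complementary ways. Using the right-hand representation together with the triangle inequality for the Frobenius norm yields $\|C\|_F \leq \sum_j \|D_j A^- D_j^*\|_F = k\sqrt{S^-}$, hence $\tr(C^2) \leq k^2 S^-$. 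Using the left-hand representation, the cyclic property of the trace together with $D_j^* D_l = D_{l-j}$ give
\[
\tr(C^2) \;=\; \sum_{j,l=0}^{k-1} \tr(D_{j-l} A^+ D_{j-l}^* A^+) \;=\; k \sum_{m=0}^{k-1} \tr(D_m A^+ D_m^* A^+).
\]
Each summand equals $\|(A^+)^{1/2} D_m (A^+)^{1/2}\|_F^2 \geq 0$ by the existence of a PSD square root of $A^+$, so discarding the terms with $m \neq 0$ gives $\tr(C^2) \geq k \tr((A^+)^2) = k S^+$. Combining the two bounds yields $k S^+ \leq k^2 S^-$, i.e., $\chi = k \geq S^+/S^-$ (the edgeless case $A = 0$ being trivial).

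The main obstacle is spotting the asymmetric handling of the two expressions for $C$: the triangle inequality applied to the $A^-$ side furnishes an upper bound on $\tr(C^2)$, while positive semidefiniteness exploited on the $A^+$ side furnishes a lower bound, and neither ingredient alone yields anything useful. Moreover, this argument appears inherently loose by roughly a factor of one, because the $m \neq 0$ cross-terms discarded in the lower bound presumably carry precisely the information one would need in order to reach the sharper conjectured bound $\chi \geq 1 + S^+/S^-$.
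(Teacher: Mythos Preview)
Your proof is correct and takes a genuinely different route from the paper's. Both arguments start from the same ``conversion'' identity $\sum_{j} D_j A D_j^* = 0$ (the paper's Theorem~\ref{thm:conversion}), but the estimates diverge from there. The paper isolates one copy of $A$ on the right by noting that $D_0 = I$, multiplies the resulting identity by $|A|$ and takes the trace, obtaining $S^+ - S^-$ on one side; it then bounds each term $\tr(|A|\,U_s(-A)U_s^\dagger)$ via a unitary-stochastic matrix and Birkhoff's theorem, and needs a symmetrization (averaging the contributions of $U_s$ and $U_s^\dagger$) to force certain cross terms to cancel. Your argument instead forms $C=\sum_j D_j A^+ D_j^*=\sum_j D_j A^- D_j^*$ and squeezes $\tr(C^2)$: the Frobenius triangle inequality on the $A^-$ side gives the upper bound $k^2 S^-$, while the cyclic group structure $D_j^* D_l = D_{l-j}$ together with positivity of $\tr(D_m A^+ D_m^* A^+)=\|(A^+)^{1/2}D_m(A^+)^{1/2}\|_F^2$ gives the lower bound $k S^+$. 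Your approach is more elementary---no Birkhoff, no doubly-stochastic machinery, no separate symmetrization step---and the group relation among the $D_j$ does the bookkeeping cleanly. The paper's approach, by contrast, tracks the individual cross terms $x^T PCPx$ explicitly, which is what allows its Remark at the end of Section~\ref{sec:weaker} to pinpoint exactly where the missing ``$+1$'' is lost for $K_n$; your discarded $m\neq 0$ terms play the analogous role, as you correctly note.
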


It is possible to further improve the above generalized {\sc Hoffman} bound and the above weaker bound.  Let $W$ be an arbitrary self-adjoint matrix.  Denote by $\mu_1(W*A),\ldots,\mu_n(W*A)$ the eigenvalues of the Schur product $W*A$, ordered in non-increasing order.  

Theorem~\ref{thm:genHoffman} and the following more general result were first proved by {\sc Wocjan} in the unpublished PhD thesis \cite{Wocjan03}.


\begin{theorem}\label{thm:genHoffmanW}
The chromatic number is bounded from below by
\[
\chi \ge 1 + \max_W \max_{m=1,\ldots,n-1} \left\{ \frac{\sum_{i=1}^m \mu_i(W*A)}{-\sum_{i=1}^m \mu_{n-i+1}(W*A)} \right\},
\]
where $W$ ranges over all self-adjoint matrices and $\mu_1(W*A),\ldots,\mu_n(W*A)$ are the eigenvalues of the Schur product $W*A$ sorted in non-increasing order.
\end{theorem}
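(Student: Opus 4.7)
The plan is to adapt the unitary-averaging and majorization proof of Theorem~\ref{thm:genHoffman}, observing that $W*A$ inherits from $A$ the one crucial property that makes the argument go through.

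First, I would suppose $\chi(G) = k$ and fix a proper $k$-coloring $c : V \to \{0,\ldots,k-1\}$. Let $\omega = e^{2\pi i / k}$ and define the diagonal unitaries $U_\ell = \mathrm{diag}(\omega^{\ell c(v)} : v \in V)$ for $\ell = 0,\ldots,k-1$. For any matrix $M$ one has $(U_\ell M U_\ell^*)_{uv} = \omega^{\ell(c(u)-c(v))} M_{uv}$, and when $M = W*A$ this entry vanishes unless $(u,v) \in E$, in which case $c(u) \neq c(v)$ and $\sum_{\ell=0}^{k-1} \omega^{\ell(c(u)-c(v))} = 0$. Consequently
\[
\sum_{\ell=0}^{k-1} U_\ell (W*A) U_\ell^* = 0,
\]
which realizes the paper's ``conversion of the matrix into zero via unitary conjugation.''

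Setting $M := W*A$ (self-adjoint, since $W$ and $A$ are) and isolating the $\ell = 0$ term, this identity rewrites as $M = -\sum_{\ell=1}^{k-1} U_\ell M U_\ell^*$. Each conjugate has the same spectrum as $M$, so Ky Fan's inequality for the sum of the $m$ largest eigenvalues gives
\[
\sum_{i=1}^m \mu_i(W*A) \leq \sum_{\ell=1}^{k-1} \sum_{i=1}^m \lambda_i(-U_\ell M U_\ell^*) = -(k-1) \sum_{i=1}^m \mu_{n-i+1}(W*A),
\]
where I use $\lambda_i(-X) = -\lambda_{n-i+1}(X)$. Provided $-\sum_{i=1}^m \mu_{n-i+1}(W*A) > 0$ (otherwise the bound is vacuous), rearranging yields $k \geq 1 + \sum_{i=1}^m \mu_i(W*A) / \bigl(-\sum_{i=1}^m \mu_{n-i+1}(W*A)\bigr)$. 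Maximizing over $m$ and over self-adjoint $W$ gives the theorem.

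The main obstacle I anticipate is conceptual rather than computational: recognizing that no property of $W$ beyond self-adjointness is needed. The unitaries $U_\ell$ are built purely from the coloring, and the crucial ``supported on the edge set of $G$'' property of $A$ is preserved by Schur multiplication with an arbitrary $W$. Once this is noticed, the Ky Fan bookkeeping used for $A$ alone transfers verbatim to $W*A$, and one can optimize over $W$ to strengthen the bound.
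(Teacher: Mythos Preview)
Your proposal is correct and follows essentially the same approach as the paper: build the diagonal unitaries from a coloring via roots of unity, obtain the identity $\sum_{\ell} U_\ell (W*A) U_\ell^\dagger = 0$, isolate one term, and then bound the top-$m$ eigenvalue sums of the resulting equality. The only cosmetic difference is that you invoke Ky Fan's inequality directly, whereas the paper phrases the same step as the majorization $\mu^\downarrow(W*A) \prec (c-1)\,\mu^\downarrow(-W*A)$ via \cite[Corollary~III.4.2]{bhatia}; these are equivalent formulations of the same fact.
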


We recover Theorem~\ref{thm:genHoffman} by restricting to the case where $W$ is the all-one-matrix $J$.  
We also point out that Theorem~\ref{thm:genHoffmanW} can be seen as a generalization of {\sc Lovasz}'s result \cite{Lovasz79} in which $W$ ranges over all symmetric matrices and $m=1$.

The following lower bound on the chromatic number due to {\sc Barnes} \cite{Barnes01} can also be understood as a special case of Theorem~\ref{thm:genHoffmanW}.


\begin{theorem}\label{thm:barnes}
The chromatic number is bounded from below by
 \begin{equation}\label{eq:barnes}
 \chi \ge 1 + \max_D \{ \mu_1(D^{-\frac{1}{2}} A D^{-\frac{1}{2}}) \},
 \end{equation}
where $\mu_1(D^{-\frac{1}{2}} A D^{-\frac{1}{2}})$  denotes the maximum eigenvalue of  $D^{-\frac{1}{2}} A D^{-\frac{1}{2}}$ and $D=\mathrm{diag}(d_1,\ldots,d_n)$ ranges of all diagonal matrices such that $A+D$ is positive semidefinite. (The condition $A+D$ is positive semidefinite implies that all $d_1,\ldots,d_n$ are necessarily positive so that we can form the matrix $D^{-\frac{1}{2}}$).

Moreover, the maximization process over such $D$ can be solved efficiently with linear programming.
\end{theorem}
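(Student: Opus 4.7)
The plan is to derive Theorem~\ref{thm:barnes} by specializing Theorem~\ref{thm:genHoffmanW} at $m=1$ with a suitably chosen weight matrix $W$ that depends on $D$. Concretely, for every admissible $D=\mathrm{diag}(d_1,\ldots,d_n)$ (meaning $d_i>0$ and $A+D\succeq 0$), I will exhibit a self-adjoint $W$ so that the ratio $\mu_1(W*A)/(-\mu_n(W*A))$ appearing in Theorem~\ref{thm:genHoffmanW} is at least $\mu_1(D^{-1/2}AD^{-1/2})$, and then maximize over $D$.

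The natural choice is the symmetric matrix $W$ with entries $W_{ij}=1/\sqrt{d_i d_j}$. Off the diagonal this gives $(W*A)_{ij}=a_{ij}/\sqrt{d_i d_j}=(D^{-1/2}AD^{-1/2})_{ij}$, and since $A$ has zero diagonal both $W*A$ and $D^{-1/2}AD^{-1/2}$ vanish on the diagonal. Hence the two matrices coincide entrywise, and in particular have the same spectrum. The assumption $A+D\succeq 0$, via the congruence $A+D = D^{1/2}(D^{-1/2}AD^{-1/2}+I)D^{1/2}$, is equivalent to $D^{-1/2}AD^{-1/2}+I\succeq 0$, i.e.\ $\mu_n(W*A)\ge -1$; equivalently $-\mu_n(W*A)\le 1$.

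Applying Theorem~\ref{thm:genHoffmanW} with this $W$ at $m=1$ then yields
\[
\chi - 1 \;\ge\; \frac{\mu_1(W*A)}{-\mu_n(W*A)} \;\ge\; \mu_1(W*A) \;=\; \mu_1(D^{-1/2}AD^{-1/2}),
\]
where the middle step uses $-\mu_n(W*A)\le 1$ together with $\mu_1(W*A)\ge 0$; the latter holds because $W*A$ is traceless and therefore has at least one nonnegative eigenvalue. Taking the supremum over admissible $D$ produces the main inequality~\eqref{eq:barnes}.

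The point that I expect to require more care is the \emph{moreover} clause that the maximization over $D$ can be carried out by linear programming, since $A+D\succeq 0$ is a priori a semidefinite rather than linear constraint. My plan here is to follow Barnes~\cite{Barnes01}: invoke Perron--Frobenius on the nonnegative matrix $D^{-1/2}AD^{-1/2}$ to rewrite $\mu_1$ via the Collatz--Wielandt characterization, perform the change of variables $y=D^{-1/2}x$ so that the generalized eigenvalue equation becomes $Ay=\lambda D y$, and then show that after appropriate normalization of $y$ the joint optimization in the $y_i$, the $d_i$, and the scalar $\lambda$ is captured by linear inequalities of the form $\sum_{j\sim i} y_j \le \lambda d_i y_i$. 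Checking that this LP reduction faithfully encodes the original SDP constraint is the delicate ingredient; the spectral inequality itself is essentially immediate from Theorem~\ref{thm:genHoffmanW}.
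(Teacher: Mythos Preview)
Your proposal is correct and follows exactly the approach the paper indicates: set $W_{ij}=d_i^{-1/2}d_j^{-1/2}$ and specialize Theorem~\ref{thm:genHoffmanW} at $m=1$. In fact you spell out more than the paper does---the paper merely asserts in one sentence that the bound follows from this choice of $W$, whereas you correctly identify that the positive-semidefiniteness of $A+D$ is precisely what forces $-\mu_n(W*A)\le 1$ via the congruence $A+D=D^{1/2}(W*A+I)D^{1/2}$, which is the step needed to pass from the Hoffman-type ratio to the bare $\mu_1$. For the linear-programming clause the paper likewise just defers to Barnes~\cite{Barnes01}, so your plan to reproduce his Collatz--Wielandt reduction is appropriate.
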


The {\sc Hoffman} bound occurs as a special case of this theorem by setting $D=-\mu_n I$.   
The lower bound in eq.~(\ref{eq:barnes}) follows as a special case of Theorem~\ref{thm:genHoffmanW}, which is seen by setting $W=(w_{ij})=(d_i^{-\frac{1}{2}} d_j^{-\frac{1}{2}})$.  An important aspect of this result is that when $W$ is restricted to have this special form and $m=1$, then the maximization process over such restricted $W$ can be solved efficiently with the help of linear programming.  It is not clear how to obtain the best possible $W$ and $m$ when no restrictions are placed on $W$ and $m$.


\begin{theorem}\label{thm:weakerW}
The chromatic number is bounded from below by 
\[
\chi \ge \max_W \left\{ \frac{S^+(W*A)}{S^-(W*A)} \right\},
\]
where $W$ ranges over all self-adjoint matrices, $S^+(W*A)$ is the sum of the squares of the positive eigenvalues $\mu_1(W*A),\ldots,\mu_\pi(W*A)$ and $S^-(W*A)$ the sum of the squares of the negative eigenvalues $\mu_{n-\nu+1}(W*A),\ldots,\mu_n(W*A)$ of the Schur product $W*A$.
\end{theorem}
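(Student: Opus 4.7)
The plan is to repeat the proof of Theorem~\ref{thm:weaker} verbatim with the adjacency matrix $A$ replaced by $M:=W*A$. The matrix $M$ is self-adjoint (Schur product of two self-adjoint matrices), and since the entries of $A$ vanish within any color class, so do the entries of $M$. Thus for any proper $k$-coloring $c\colon V\to\{0,\ldots,k-1\}$ with $k=\chi(G)$ we have $M_{uv}=0$ whenever $c(u)=c(v)$.

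Introduce the diagonal unitary $U_{vv}=\omega^{c(v)}$ with $\omega=e^{2\pi i/k}$. A direct entrywise computation --- the geometric sum $\sum_{j=0}^{k-1}\omega^{j(c(u)-c(v))}$ equals $k$ on same-color pairs (on which $M_{uv}=0$) and vanishes otherwise --- gives the \emph{nilpotentization identity}
\[
\sum_{j=0}^{k-1} U^{j}\,M\,U^{-j} \;=\; 0.
\]
Write the Jordan decomposition $M=M^+-M^-$, with $M^{\pm}\succeq 0$ supported on orthogonal subspaces, and set
\[
P := \sum_{j=0}^{k-1} U^{j} M^+ U^{-j}, \qquad N := \sum_{j=0}^{k-1} U^{j} M^- U^{-j}.
\]
Both $P$ and $N$ are positive semidefinite, and the identity above forces $P=N$; in particular $\|P\|_F^{2}=\|N\|_F^{2}$.

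The heart of the argument is an \emph{asymmetric} comparison of these two Frobenius norms. Expanding $\tr(P^2)=\sum_{j,j'=0}^{k-1}\tr\!\bigl(M^+ U^{j'-j} M^+ U^{-(j'-j)}\bigr)$, every summand is the trace of a product of two positive semidefinite matrices and so is nonnegative, while the $k$ diagonal terms $(j=j')$ each equal $\tr((M^+)^2)=S^+(W*A)$; hence $\|P\|_F^{2}\ge k\,S^+(W*A)$. On the other hand, the Frobenius triangle inequality together with unitary invariance yields
\[
\|N\|_F \;\le\; \sum_{j=0}^{k-1}\|U^{j} M^- U^{-j}\|_F \;=\; k\,\|M^-\|_F \;=\; k\sqrt{S^-(W*A)},
\]
so $\|N\|_F^{2}\le k^{2} S^-(W*A)$. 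Equating the two norms gives $k\,S^+(W*A)\le k^{2}\,S^-(W*A)$, i.e., $\chi \ge S^+(W*A)/S^-(W*A)$; maximizing over self-adjoint $W$ completes the proof. The main subtlety I anticipate is spotting this asymmetric treatment of $P$ and $N$: applying the triangle inequality on both sides (or the PSD-cross-term bound on both sides) collapses the inequality to something trivial, so one must pair a \emph{lower} bound on $\|P\|_F^2$ coming from nonnegativity of traces of products of PSD matrices with an \emph{upper} bound on $\|N\|_F^2$ coming from the triangle inequality.
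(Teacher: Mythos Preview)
Your proof is correct and takes a genuinely different route from the paper's.

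The paper works with the $(c-1)$-term identity $\sum_{s=1}^{c-1}U_s(-B)U_s^\dagger=B$ (Corollary~\ref{cor:conversion}), multiplies by $|B|$, takes the trace, and then invokes a fairly delicate analysis (Lemma~\ref{lem:lowerBoundMatrixExpr} and Corollary~\ref{cor:selfadjoint}) involving unitary-stochastic matrices and the projectors onto the positive and non-positive eigenspaces. A symmetrization trick---averaging the contributions of $U_s$ and $U_s^\dagger$---is needed to cancel two cross terms of the form $x^TNCPx$ and $x^TPCNx$, after which each summand is bounded by $S^-$.

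You instead keep the full $k$-term identity, pass to the Jordan decomposition $M=M^+-M^-$, and compare Frobenius norms of $P=\sum_j U^jM^+U^{-j}$ and $N=\sum_j U^jM^-U^{-j}$ asymmetrically: the lower bound $\|P\|_F^2\ge kS^+$ comes from $\tr(X Y)\ge 0$ for positive semidefinite $X,Y$, and the upper bound $\|N\|_F^2\le k^2S^-$ from the triangle inequality. This is more elementary---it sidesteps the doubly-stochastic machinery and the $U_s/U_s^\dagger$ symmetrization entirely---and arguably clarifies why the argument yields $\chi$ rather than $\chi-1$: your lower bound picks up only the $k$ diagonal terms out of $k^2$. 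The paper's approach, on the other hand, keeps more of the structure visible (the four-term decomposition in~(\ref{eq:fourTerms})), which is what allows the authors to diagnose in their Remark why the method cannot reach the conjectured $1+S^+/S^-$.
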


The paper is organized as follows. In Section~\ref{sec:genHoffman}, we prove the generalized Hoffman bound (Theorems \ref{thm:genHoffman} and \ref{thm:genHoffmanW}).  First, we present a new technique of converting the adjacency matrix into the zero matrix by conjugating with certain diagonal unitary matrices that are constructed from Fourier matrices. Second, we review some basic results of majorization theory of spectra of self-adjoint matrices.  Third, we apply these majorization results to a matrix equality obtained by the conversion technique to derive the generalized Hoffman bound.
In Section~\ref{sec:weaker}, we prove Theorem~\ref{thm:weakerW} by finding a suitable upper bound on the trace of terms occurring in a matrix equality obtained by the conversion technique.  
In Section~\ref{sec:evidence}, we present evidence for Conjecture~\ref{conj}.  
In Section~\ref{sec:performance}, we discuss the performance of the generalized Hoffman bound and Conjecture~\ref{conj}.  


\section{Generalized {\sc Hoffman} bound}\label{sec:genHoffman}

\subsection{Conversion of the adjacency matrix $A$ into the zero matrix}
Recall that $G$ is colorable with $c$ colors if there exists a map $\Phi : V \rightarrow C=\{1,\ldots,c\}$ such that  $a_{k\ell}=1$ implies $\Phi(k)\neq\Phi(\ell)$ for all $k,\ell\in V$.  In words, the graph can be colored with $c$ colors if it is possible to assign at most $c$ different colors to its vertices such that any two adjacent vertices receive different colors.  The chromatic number $q$ is the minimum number of colors required to color the graph.

The following new technique of converting the adjacency matrix into the zero matrix is at the heart of our new bounds.  Here $\dagger$ denotes the operation of taking the transpose of a matrix and changing its entries to their complex conjugates.


\begin{theorem}\label{thm:conversion}
Assume that there exists a coloring of $G$ with $c$ colors.  Then, there exist $c$ diagonal unitary matrices $U_1, \ldots, U_c$ whose entries are $c$th roots of unity such that
\begin{equation}
\sum_{s=1}^{c}  U_s (W * A) U_s^{\dagger}  = 0,
\end{equation}
where $W$ is an arbitrary self-adjoint matrix and $W*A$ denotes the entry-wise product of $W$ and $A$.  

Moreover, this equality remains valid if we replace these diagonal unitary matrices $U_s$ by their inverses $U_s^\dagger$ for $s=1,\ldots,c$.
\end{theorem}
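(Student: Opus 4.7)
The plan is to use the coloring $\Phi: V\to\{1,\ldots,c\}$ directly to define the diagonal unitaries, and then collapse the sum via a discrete Fourier/geometric series argument on the $c$th roots of unity. Let $\omega = e^{2\pi i/c}$. For each $s = 0,1,\ldots,c-1$, I would define
\[
U_s = \mathrm{diag}\bigl(\omega^{s\,\Phi(1)}, \omega^{s\,\Phi(2)}, \ldots, \omega^{s\,\Phi(n)}\bigr).
\]
Each $U_s$ is diagonal, unitary, and every diagonal entry is a $c$th root of unity, as required.

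Next I would compute the $(k,\ell)$-entry of the conjugated matrix. Since $U_s$ is diagonal,
\[
\bigl(U_s (W*A) U_s^\dagger\bigr)_{k\ell} = \omega^{s\,\Phi(k)}\,(W*A)_{k\ell}\,\omega^{-s\,\Phi(\ell)} = \omega^{s(\Phi(k)-\Phi(\ell))}\,(W*A)_{k\ell}.
\]
Summing over $s = 0,\ldots,c-1$ gives the scalar factor $\sum_{s=0}^{c-1}\omega^{s(\Phi(k)-\Phi(\ell))}$, which, by the standard geometric-series identity, equals $c$ if $\Phi(k)\equiv\Phi(\ell)\pmod{c}$ and $0$ otherwise. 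Now I would split into cases based on whether $(k,\ell)$ is an edge. If $a_{k\ell}=0$, then $(W*A)_{k\ell}=0$ regardless of $W$, so the $(k,\ell)$-entry vanishes. If $a_{k\ell}=1$, then by the coloring property $\Phi(k)\neq\Phi(\ell)$, and since both values lie in $\{1,\ldots,c\}$ their difference is nonzero modulo $c$, so the Fourier sum is $0$. Either way, $\bigl(\sum_s U_s(W*A)U_s^\dagger\bigr)_{k\ell}=0$, proving the main identity.

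For the moreover statement, the simplest observation is that the collection $\{U_s^\dagger : s = 0,\ldots,c-1\}$ is exactly the same multiset of matrices as $\{U_s : s = 0,\ldots,c-1\}$, because $U_s^\dagger = \mathrm{diag}(\omega^{-s\Phi(k)}) = \mathrm{diag}(\omega^{(c-s)\Phi(k)})$ and $s\mapsto c-s\pmod c$ permutes the index set. Thus $\sum_s U_s^\dagger (W*A) U_s$ is literally a reindexing of the original sum and is also zero. Alternatively, one can rerun the computation above: it yields $\sum_s \omega^{-s(\Phi(k)-\Phi(\ell))}$, which vanishes under exactly the same condition.

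There is no real obstacle here; the essential content is recognizing the construction. The key insight is that the rows of the $c\times c$ discrete Fourier matrix, sampled at the coloring assignment, give orthogonality relations that exactly kill the off-color entries, while the on-color entries are automatically killed by the adjacency matrix. If I had to identify a subtle point, it would be ensuring the freedom in $W$ does not interfere with the argument; this is handled cleanly because the Fourier cancellation depends only on the indices $(k,\ell)$ and the factor $(W*A)_{k\ell}$ is treated as a scalar that either vanishes (for non-edges) or is annihilated by the Fourier sum (for edges).
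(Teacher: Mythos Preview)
Your proof is correct and essentially identical to the paper's: both use the coloring $\Phi$ to define $U_s=\mathrm{diag}(\zeta^{s\Phi(k)})$ with $\zeta$ a primitive $c$th root of unity, then compute the $(k,\ell)$-entry and kill it either via $a_{k\ell}=0$ (same color) or via the Fourier/geometric sum (different colors). The only cosmetic differences are that you index $s=0,\ldots,c-1$ while the paper uses $s=1,\ldots,c$ (the same multiset of matrices, since $U_0=U_c=I$), and for the ``moreover'' clause you note that $\{U_s^\dagger\}$ is a permutation of $\{U_s\}$, whereas the paper phrases this as replacing $\zeta$ by $\bar\zeta$---again the same observation.
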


We also formulate the following simple corollary since it leads to the proof of Theorem~\ref{thm:genHoffmanW}  and Theorem~\ref{thm:weakerW}.


\begin{cor}\label{cor:conversion}
Assume that there exists a coloring of $G$ with $c$ colors.  Then, there exist $c-1$ diagonal unitary matrices $U_1,\ldots, U_{c-1}$ whose entries are $c$th roots of unity such that
\begin{equation}\label{cor:reversal}
\sum_{s=1}^{c-1} U_s (-W * A) U_s^\dagger  = W*A.
\end{equation}
Moreover, this equality remains valid if we replace these diagonal unitary matrices $U_s$ by their inverses $U_s^\dagger$ for $s=1,\ldots,c-1$.
\end{cor}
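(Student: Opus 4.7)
The plan is to obtain Corollary~\ref{cor:conversion} as an immediate consequence of Theorem~\ref{thm:conversion} by isolating an ``identity'' term on the left-hand side of the conversion identity.

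The key observation is that in the construction behind Theorem~\ref{thm:conversion} (built from the Fourier matrix, as advertised in the introduction), the $c$ diagonal unitaries naturally take the form $(U_s)_{kk} = \omega^{(s-1)\Phi(k)}$ with $\omega = e^{2\pi i/c}$ and $\Phi : V \to \{1,\ldots,c\}$ a valid coloring. For $s=1$ this yields $U_1 = I$, whose diagonal entries are $1$, trivially a $c$-th root of unity, so the hypothesis of Theorem~\ref{thm:conversion} is met.

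Given this, the derivation is one line. From Theorem~\ref{thm:conversion} I write
\[
U_1(W*A)U_1^\dagger + \sum_{s=2}^{c} U_s(W*A)U_s^\dagger = 0,
\]
recognize the first summand as $W*A$, move it to the right, absorb the minus sign into the conjugated factor, and relabel $s \mapsto s-1$ to obtain
\[
\sum_{s=1}^{c-1} U_s(-W*A)U_s^\dagger = W*A,
\]
which is exactly the stated equality. For the ``moreover'' part, I repeat the argument starting from the dual identity $\sum_{s=1}^c U_s^\dagger (W*A) U_s = 0$ provided by the final clause of Theorem~\ref{thm:conversion}; since $U_1^\dagger = I^\dagger = I$, the same isolation-and-relabel step works verbatim.

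The only real ``obstacle'' is notational rather than mathematical: one must confirm that the list of $c$ matrices delivered by Theorem~\ref{thm:conversion} can be taken to include the identity. The Fourier construction makes this automatic, but if one wished to avoid relying on the specific form of the $U_s$, an alternative would be to conjugate the whole identity of Theorem~\ref{thm:conversion} by $U_1^\dagger$ on the left and $U_1$ on the right; this preserves the sum-to-zero relation and replaces $U_s$ by $U_1^\dagger U_s$, so that the first term becomes the identity while every other term remains a diagonal unitary whose entries are $c$-th roots of unity. Either route reduces the corollary to a trivial rearrangement of Theorem~\ref{thm:conversion}.
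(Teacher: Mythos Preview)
Your proposal is correct and takes essentially the same approach as the paper: isolate the identity term among the $c$ diagonal unitaries furnished by Theorem~\ref{thm:conversion} and move it to the other side. The only cosmetic difference is indexing---in the paper's convention $U_s=\mathrm{diag}(\zeta^{\Phi(1)s},\ldots,\zeta^{\Phi(n)s})$, so it is $U_c$ (not $U_1$) that equals $I$, corresponding to the all-one last row and column of $F_c$; your shifted exponent $(s-1)\Phi(k)$ simply relocates the identity to $s=1$.
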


%
%

\begin{remark}
We point out that the ``sign reversal map'' $-W*A\mapsto W*A$ described in the corollary above is an abstraction
of a problem in quantum control theory \cite{JK99,Wocjan03}. A closed quantum system evolves according to $\psi(t)=e^{-iHt}\psi(0)$, where the self-adjoint operator $H$ is the system Hamiltonian and $\psi(t)$ is a vector specifying the configuration of the quantum system at time $t$.

The quantum system can be made to evolve backwards in time by interspersing its natural time evolution $e^{-iHt}$ with certain external control operations which correspond to unitary operations.  This task amounts to effectively changing the Hamiltonian $H$ to $-H$.   

The $k$th vertex of the graph corresponds to the $k$th subsystem and the presence of an edge between vertices $k$ and $\ell$ indicates that $H$ couples the corresponding  subsystems. The unitary matrix $U_s$ corresponds to an external control operations and $s$ indicates the step of the overall control sequence.

It can be shown that the cost of inverting the time evolution is bounded from above by a quantity related to the chromatic number of the 
graph characterizing the coupling topology of $H$ and is bounded from below by an expression involving eigenvalues of $H$.  These results correspond to the generalized {\sc Hoffman's} lower bound on the chromatic number.
\end{remark}

Before proving the theorem and corollary we need to define Fourier matrices.  For each $c$, the matrix
\[
F_c = \left(
\begin{array}{cccc}
\zeta^{1 \cdot 1}      & \zeta^{1\cdot 2}       & \ldots  & \zeta^{1\cdot c} \\
\zeta^{2 \cdot 1}      & \zeta^{2\cdot 2}       & \ldots  & \zeta^{2\cdot c} \\
\vdots                      & \vdots                     &  \ddots &  \vdots                        \\
\zeta^{c \cdot 1}      & \zeta^{c\cdot 2}       & \ldots  & \zeta^{c\cdot c} \\
\end{array}
\right),
\]
where $\zeta$ is an arbitrary primitive $c$th root of unity, is called a Fourier matrix.\footnote{In the literature, it is often customary to start the row and column indices at $0$ instead of $1$.  For our purposes, it is more convenient to choose $1$.}  We have $F_c F_c^\dagger = cI$, that is, its rows are orthogonal vectors and $F_c^\dagger F = c I$, that is, its columns are orthogonal vectors.  Observe that the last row of $F_c$ is equal to the all-one row vector and the last column of $F_c$ is equal to the all-one column vector. 

%
%

\begin{proof} (Theorem~\ref{thm:conversion})
Let $\Phi : V=\{1,\ldots,n\} \rightarrow C=\{1,\ldots,c\}$ be a coloring of $G$.  The entries of the $s$th column of $F_c$ determine the entries of diagonal unitary matrix  $U_s$.  More precisely, the $k$th diagonal entry of $U_s$, which corresponds to vertex $k$,  is set to be $\Phi(k)$th entry of the $s$th column of $F_c$.
Therefore, vertices that have the same color select the same row of $F_c$ and vertices that have different colors select different rows. 

For $s=1,\ldots,c$, we set 
\[
	U_s = \mathrm{diag}(\zeta^{\Phi(1)\cdot s},\zeta^{\Phi(2)\cdot s},\ldots,\zeta^{\Phi(n)\cdot s}) .
\]
Observe that the multiplication of $W*A$ by the diagonal unitary matrix $U_s$ from the left corresponds to the multiplication of the $k$th row of $W*A$ by the $k$th diagonal entry of $U_s$ for $k=1,\ldots,n$.  Similarly, the multiplication of $W*A$ by the diagonal unitary matrix $U_s^\dagger$ from the right corresponds to the multiplication of the $\ell$th column of $W*A$ by the $\ell$th diagonal entry of $U_s^\dagger$ for $\ell=1,\ldots,n$.

Therefore, the $(k,\ell)$th entry of the sum of the conjugates 
\[
\sum_{s=1}^{c} {U_s} (W*A) U_s^\dagger
\]
is equal to 
\[
\sum_{s=1}^{c} \zeta^{\Phi(k)\cdot s} w_{k\ell} a_{k\ell} \, \zeta^{-\Phi(\ell)\cdot s}
\]
for $k,\ell=1,\ldots,n$.

If $\Phi(k)=\Phi(\ell)$, then necessarily $a_{k\ell}=0$ because vertices that receive the same color under the coloring $\Phi$ cannot be adjacent.  Consequently, the corresponding entry of the above sum is equal to $0$.  

If $\Phi(k)\neq\Phi(\ell)$, then $a_{k\ell}=1$ or $a_{k\ell}=0$.  Even if $a_{k\ell}=1$ the corresponding entry of the sum is also equal to $0$ since
\[
\sum_{s=1}^{c} 
\zeta^{(\Phi(k)-\Phi(\ell))\cdot s} = 0.
\]
This equality means that the $\Phi(k)$th and $\Phi(\ell)$th rows of $F_c$ are orthogonal.

Observe that replacing the diagonal unitary matrices $U_s$ by their inverses $U_s^\dagger$ corresponds to replacing the Fourier matrix $F_c$ by its adjoint $F_c^\dagger$, which is a Fourier matrix with respect to the primitive $c$th root of unity $\bar{\zeta}$.  
\end{proof}

%
%

\begin{proof}(Corollary~\ref{cor:conversion})  This result follows immediately because $U_c=U_c^\dagger=I$ since the last row and the last column of $F_c$ are the all-one row vector and the all-one column vector, respectively.
\end{proof}

%
%

To further illuminate the process in Theorem~\ref{thm:conversion}, we now consider $\sum_{s=1}^c U_s X U_s^\dagger$ for arbitrary matrices $X$ and not just matrices of the special form $W*A$.

\begin{remark}
Let $\Phi : V \rightarrow C=\{1,...c\}$ be a coloring and $U_1,\ldots,U_c$ the corresponding diagonal unitary matrices constructed as in the proof of Theorem~\ref{thm:conversion}.  For $b=1,...,c$, set
\[
P_b = \sum_{k\in\Phi^{-1}(b)} e_k^\dagger e_k,
\]
where $e_1=(1,0,\ldots,0)^T,\ldots,e_n=(0,\ldots,0,1)^T$ are the standard basis vectors of $\C^n$.  In words, $P_b$ is the orthogonal projector onto the subspace spanned by $e_k$ for which the corresponding vertex $k$ receives the color $b$ under the coloring $\Phi$.

For an arbitrary matrix $X$, we have
\[
\sum_{s=1}^c U_s X U_s^\dagger = c \sum_{b=1}^c P_b X P_b.
\]
The operation taking $X$ to $\mathcal{C}(X):=\sum_{b=1}^c P_b X P_b$ is known in the literature as pinching \cite[Problem II.5.5]{bhatia}.
\end{remark}


\subsection{Majorization of spectra of self-adjoint operators}

We recall some basic definitions and results in majorization.  We refer the reader to \cite[Chapters II and III]{bhatia}.
Let $x=(x_1,\ldots,x_n)$ be an element of $\R^n$.  Let $x^\downarrow$ be the vector obtained by rearranging the coordinates of $x$ in the non-increasing order.  Thus, if $x^\downarrow=(x^\downarrow_1,\ldots,x^\downarrow_n)$, then $x^\downarrow_1\ge\ldots \ge x^\downarrow_n$.

Let $x,y\in\R^n$.  We say that $x$ is majorized by $y$, in symbols $x \prec y$, if
\begin{equation}\label{eq:maj}
\sum_{i=1}^m x^\downarrow_i \le \sum_{i=1}^m y^\downarrow_i
\end{equation}
for $m=1,\ldots,n-1$ and
\[
\sum_{i=1}^n x^\downarrow_i = \sum_{i=1}^n y^\downarrow_i.
\]
Let $A$ be a self-adjoint operator acting on $\C^n$ and $\mu(A)$ denote the vector in $\R^n$ whose coordinates are the eigenvalues of $A$ specified in any order.

Let $A,B$ be two arbitrary self-adjoint operators.  Then, \cite[Corollary~III.4.2]{bhatia} shows that
\begin{equation}\label{eq:majAB}
\mu^\downarrow(A+B)\prec \mu^\downarrow(A) + \mu^\downarrow(B).
\end{equation}

%
%

\subsection{Proof of the generalized {\sc Hoffman} bound}

\begin{proof}(Theorem~\ref{thm:genHoffmanW})
Consider Corollary~\ref{cor:conversion} and the matrix equality
\[
\sum_{s=1}^{c-1} U_s (-W*A) U_s^\dagger = W*A.
\]
Using the result in eq.~(\ref{eq:majAB}) and induction on $c$, we see that 
\[
\mu^\downarrow(W*A) \prec \sum_{s=1}^{c-1} \mu^\downarrow(U_s (-W*A) U_s^\dagger).
\]
Since conjugation of $-W*A$ by the unitary matrices $U_s$ leaves the spectrum invariant, we obtain
\[
\mu^\downarrow(W*A) \prec (c-1) \mu^\downarrow(-W*A).
\]
Note that $\mu^\downarrow (-W*A) = (-\mu_n(W*A),\ldots,-\mu_1(W*A))$.  The result now follows by using the condition in eq.~(\ref{eq:maj}) for $m=1,\ldots,n-1$, dividing both sides by $-\sum_{i=1}^m \mu_{n-i+1}(W*A)$, and adding $1$ to both sides.
\end{proof}

\subsection{Relation to orthogonal representations}

We now strengthen the result of Theorem~\ref{thm:conversion}.  To do this, we need to introduce orthogonal representations of graphs as studied in \cite{CMNSW07,HPSWM10}, which occur in the study of the quantum chromatic number.  
A $d$-dimensional orthogonal representation of $G$ is a map $\Psi : V \rightarrow \C^d$, mapping vertices to $d$-dimensional column vectors such that $a_{k\ell}=1$ implies $\Psi(k)^\dagger \Psi(\ell)=0$ for $k,\ell\in V$.
The orthogonal rank of $G$, denoted by $\xi$, is the minimum $d$ such that there exists an orthogonal representation of $G$ in $\C^d$.  Furthermore, let $\xi'$ be the smallest $d$ such that $G$ has an orthogonal representation in the vector space $\C^d$ with the added restriction that the entries of each vector must have modulus one \cite{CMNSW07}.  We refer to these representations as normalized orthogonal representations and to $\xi'$ as the normalized orthogonal rank.

To see that $\xi'\le \chi$ holds, we color the vertices with $\chi$ colors according to the coloring $\Phi$ and assign the $\Phi(k)$th column of $F_\chi$ to vertex $k$ for $k=1,\ldots,n$.  

We now show there always exist $\xi'$ diagonal unitary matrices $U_1,\ldots,U_{\xi'}$ such that $\sum_{s=1}^{\xi'} U_s A U_s^\dagger = 0$, providing a strengthening of Theorem~\ref{thm:conversion}.  Denote the entries of the vector $\Psi(k)$ associated to vertex $k$ by $\Psi(k)_1,\ldots,\Psi(k)_{\xi'}$.  It is readily verified that the diagonal unitary matrices $U_s=\mathrm{diag}(\Psi(1)_s,\Psi(2)_s,\ldots,\Psi(n)_s)$ make it possible to convert $A$ to the zero matrix.

We now show that for almost all Hadamard graphs $\xi' < \chi$, relying on the ideas presented in \cite{Avis06}.  A Hadamard graph $G_N=(V,E)$ is the graph with vertex set $V=\{0,1\}^N$ and edge set $E=\{(u,v)\in V\times V \mid d_H(u,v)=N/2\}$, where $d_H$ is the Hamming distance.  For each $k=(k_1,\ldots,k_N)\in V$, define an orthogonal representation by setting $\Psi(k)=((-1)^{k_1},\ldots,(-1)^{k_N})$.  For any pair of adjacent vertices $k$ and $\ell$, we have $\Psi(k)^\dagger\Psi(\ell)=\sum_{s=1}^N (-1)^{k_s+\ell_s} = 0$.  This shows $\xi'(G_N) \le N$.  
The result \cite[Theorem~7.1]{GN08} states that $\chi(G_N)=N$ if and only if $N=2^m$ with $m\le 3$, implying that $\chi(G_{12})>12\ge \xi'(G_N)$.

Hadamard graphs are the only known graphs whose normalized orthogonal
rank $\xi'$ is strictly less than their chromatic number $\chi$. It would be interesting to find new families of graphs satisfying this strict inequality.

The above discussion shows that the generalized Hoffman bound (Theorems~\ref{thm:genHoffman} and \ref{thm:genHoffmanW}) and the weaker bound (Theorems~\ref{thm:weaker} and \ref{thm:weakerW}) are both lower bounds on the normalized orthogonal rank $\xi'$, which is always less than or equal to the chromatic number $\chi$.  This is similar to Bilu's result that Hoffman's bound is a lower bound on the vector chromatic number  \cite{Bilu06}, which in turn is bounded from above by the Lovasz $\vartheta$ number of the complementary graph \cite[Theorem~8.1]{KMS98}.  We do not know the relationship between the vector chromatic number and the normalized orthogonal rank.  The problem is that the vector chromatic number is defined using vectors with real entries, whereas the orthogonal rank is defined using vectors with complex entries.


\section{The weaker lower bound $\chi\ge S^+/S^-$}\label{sec:weaker}

%
%

\subsection{Upper bound on the trace of certain matrix expressions}

In this section, we prove an upper bound on the trace of a matrix expression involving an arbitrary self-adjoint matrix $A$ and an arbitrary unitary matrix $U$.  The matrix $A$ corresponds to the adjacency matrix and $U$ to any of the unitary matrices $U_s$ or its inverse $U_s^\dagger$.

Let $A$ be an arbitrary self-adjoint matrix and $U$ an arbitrary unitary matrix, both acting on $\C^n$.  Let 
\[
	A = \sum_{i=1}^n \mu_i v_i v_i^\dagger,
\]
be the spectral resolution of $A$, that is, $\mu_1,\ldots,\mu_n$ are the eigenvalues of $A$ and $v_1,\ldots,v_n$ the corresponding row eigenvectors of unit length.      
We order the eigenvalues of $A$ in non-increasing order, that is, $\mu_1 \ge \mu_2 \ge \ldots \ge \mu_n$.  The eigenvectors $v_i$ can always be chosen so that they form an orthonormal basis of $\C^n$. 

Let $\pi$ denote the number of positive eigenvalues of $A$ and set
\[
	S^+ = \sum_{i=1}^\pi \mu_i^2, \quad\mbox{and}\quad
	S^-  = \sum_{i=n-\nu+1}^n \mu_i^2.
\]
We also work with the vector space $\R^n$.  Let $e_1=(1,0,\ldots,0)^T,\ldots,e_n=(0,\ldots,0,1)^T$ denote the standard basis vectors of $\R^n$.  Set
$x=(|\mu_1|,\ldots,|\mu_n|)^T\in\R^n$.  Recall that the inertia of $A$ is the ordered triple $(\pi,\nu,\delta)$, where $\pi$, $\nu$ and $\delta$ are the numbers (counting multiplicities) of positive, negative and zero eigenvalues of $A$ respectively.
Define the projectors $P=\sum_{i=1}^\pi e_i e_i^\dagger$, and $N=\sum_{i=\pi+1}^n e_i e_i^\dagger$.  Observe that $P$ is the projector onto the subspace spanned by $e_i$ with $\mu_i$ positive, $N$ the projector onto the subspace spanned by $e_i$ with $\mu_i$ non-positive, and $P+N=I$, where $I$ is the identity matrix acting on $\R^n$.
Further, note that 
\[
x = \sum_{i=1}^n |\mu_i| \cdot e_i,\quad \| P x \|^2=S^+,\quad \mbox{and} \quad \|N x \|^2=S^-. 
\]  
We define the matrix $C=(c_{ij})$ whose entries are given by 
\[
c_{ij}= v_i^\dagger U v_j v_j^\dagger U^\dagger v_i 
\]
for $i,j=1,\ldots,n$.  This matrix is doubly stochastic since $v_i$ form an orthonormal basis and $U$ is a unitary matrix.  A doubly stochastic matrix that arises in this manner is called unitary-stochastic (see \cite[Exercise~II.1.11]{bhatia}).

\begin{lemma}\label{lem:lowerBoundMatrixExpr}
Let $A$ be an arbitrary self-adjoint matrix and $U$ an arbitrary unitary matrix, both acting on $\C^n$.  Then, we have
\begin{eqnarray*}
	\tr(|A| U (-A) U^\dagger) 
	& = & 
	x^T NCN x -  x^T NCP x + x^T PCN x - x^T PCP x \\
	& \le & 
	S^- - x^T NCP x + x^T PCN x,
\end{eqnarray*}
where
\[
|A| = \sum_{i=1}^n |\mu_i| \cdot v_i v_i^\dagger
\] 
and 
$x$, $N$, $P$, and $C$ are defined as above.
\end{lemma}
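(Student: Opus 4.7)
My plan is to establish the equality by direct spectral expansion, and then to deduce the inequality from two facts about $C$: it is doubly stochastic (indeed unitary-stochastic, as already noted in the text), and its entries are non-negative.

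For the equality, I would plug the spectral resolutions of $A$ and $|A|$ into $|A|U(-A)U^\dagger$ and take the trace:
$\tr(|A|U(-A)U^\dagger) = -\sum_{i,j} |\mu_i|\mu_j\,\tr\!\bigl(v_i v_i^\dagger U v_j v_j^\dagger U^\dagger\bigr) = -\sum_{i,j} |\mu_i|\mu_j\, c_{ij},$
where the second equality uses cyclicity of the trace together with the identity $\tr(v_i v_i^\dagger U v_j v_j^\dagger U^\dagger) = (v_i^\dagger U v_j)(v_j^\dagger U^\dagger v_i) = c_{ij}$. Writing $\mu_j = +|\mu_j|$ for $j \le \pi$ and $\mu_j = -|\mu_j|$ for $j > \pi$ (a choice that is harmless for zero eigenvalues), the sum splits into four quadrants according to the signs of $\mu_i$ and $\mu_j$. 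Using $Px = \sum_{i \le \pi} |\mu_i| e_i$, $Nx = \sum_{i > \pi} |\mu_i| e_i$, and $e_i^T C e_j = c_{ij}$, these collapse respectively to $-x^T PCP x$, $+x^T PCN x$, $-x^T NCP x$, and $+x^T NCN x$, yielding the claimed identity.

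For the inequality it suffices to show $x^T NCN x - x^T PCP x \le S^-$, since the two cross terms appear identically on both sides. The term $x^T PCP x$ is non-negative because every entry of $C$ and every coordinate of $x$ is non-negative, so $-x^T PCP x \le 0$. For the principal term, I would invoke Birkhoff--von Neumann to write $C = \sum_k p_k \Pi_k$ with $p_k \ge 0$, $\sum_k p_k = 1$, and each $\Pi_k$ a permutation matrix. Setting $y = Nx \ge 0$ and applying AM--GM componentwise gives $y^T \Pi_k y = \sum_i y_i y_{\pi_k(i)} \le \frac{1}{2}\sum_i \bigl(y_i^2 + y_{\pi_k(i)}^2\bigr) = \|y\|^2$ for each $k$, whence $x^T NCN x = y^T C y \le \|y\|^2 = \|Nx\|^2 = S^-$. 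Combining the two bounds proves the inequality.

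The argument has no deep obstacle; the main content is just to recognise $C$ as doubly stochastic and to exploit that the only potentially problematic term $x^T NCN x$ is controlled by $\|Nx\|^2$ via Birkhoff--von Neumann (equivalently, by Cauchy--Schwarz together with the $\ell^2$-contractivity of doubly stochastic matrices). The only care needed is in tracking signs while splitting the double sum and in the observation that $c_{ij} = |v_i^\dagger U v_j|^2 \ge 0$, which is what makes $x^T PCP x$ non-negative.
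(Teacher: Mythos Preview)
Your proof is correct and follows essentially the same approach as the paper's: both establish the equality by spectral expansion and then obtain the inequality by dropping the non-positive term $-x^T PCP x$ and bounding $x^T NCN x \le S^-$ via Birkhoff's theorem. The only cosmetic difference is that the paper deduces $\|C\|\le 1$ from Birkhoff and then applies Cauchy--Schwarz, whereas you expand $C$ as a convex combination of permutation matrices and use AM--GM on each summand; as you yourself note, these are equivalent.
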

\begin{proof}
We have
\begin{eqnarray}
	\tr(|A| U (-A) U^\dagger) 
	& = & 
	\sum_{i=1}^n v_i^\dagger \, |A| U (-A) U^\dagger v_i 
	= 
	\sum_{i=1}^n |\mu_i| v_i^\dagger U (-A) U^\dagger v_i \nonumber \\
	& = &
	\sum_{i=1}^n |\mu_i| \sum_{j=1}^n (-\mu_j) v_i^\dagger U v_j v_j^\dagger  U^\dagger v_i 
	= 
	\sum_{i=1}^n \sum_{j=1}^n |\mu_i| c_{ij} (-\mu_j) \nonumber \\
	& = &
	x^T C (N-P) x 
	= 
	x^T (N+P) C (N-P) x \nonumber \\
	& = &
	x^T NCN x - x^T NCP x + x^T PCN x - x^T PCP x. \label{eq:fourTerms}
\end{eqnarray}
The first equality holds since the column vectors $v_i$ form an orthonormal basis of $\C^n$.  The second is due to $v_i^\dagger \, |A| = |\mu_i| v_i^\dagger$.  The third is obtained by using the spectral resolution of $-A$.  The fourth is due to the definition of the doubly stochastic matrix $C$.

Birkhoff's theorem \cite[Theorem~II.2.3]{bhatia} states that any doubly stochastic matrix can be written as a convex sum of permutation matrices.  Combining this with the triangle inequality, we conclude $\|C\|\le 1$.  The Cauchy-Schwarz inequality implies 
\[
	x^T NCN x \le \| N x\| \, \| C N x\| \le \| N x\|^2 \|C\| \le \| N x \|^2 = S^-.
\]
Finally, we obtain the desired upper bound by omitting the term $-x^T PCP x$, which is non-positive since all entries of $x^T$, $x$, $P$, and $C$
are non-negative.
\end{proof}

%
%

\begin{cor}\label{cor:selfadjoint}
Let $A$ be an arbitrary self-adjoint matrix and $U$ an arbitrary unitary matrix. Then, we have
\[
\frac{1}{2} \tr( |A| U (-A) U^\dagger) + \frac{1}{2} \tr( |A| U^\dagger (-A) U) \le S^-.
\]
\end{cor}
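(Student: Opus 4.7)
The plan is to apply Lemma~\ref{lem:lowerBoundMatrixExpr} twice — once with the unitary $U$ and once with its inverse $U^\dagger$ — and then average the two resulting inequalities so that the unwanted ``cross terms'' on the right-hand sides cancel exactly. Direct application of the lemma gives
\[
\tr(|A| U(-A) U^\dagger) \le S^- - x^T N C P x + x^T P C N x,
\]
where $C$ is the doubly stochastic matrix with entries $c_{ij} = v_i^\dagger U v_j\, v_j^\dagger U^\dagger v_i = |v_i^\dagger U v_j|^2$.

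The key observation is that the doubly stochastic matrix $C'$ that the lemma associates to $U^\dagger$ has entries
\[
c'_{ij} = |v_i^\dagger U^\dagger v_j|^2 = |\overline{v_j^\dagger U v_i}|^2 = c_{ji},
\]
so $C' = C^T$. Applying Lemma~\ref{lem:lowerBoundMatrixExpr} with $U^\dagger$ in place of $U$ therefore produces
\[
\tr(|A| U^\dagger(-A) U) \le S^- - x^T N C^T P x + x^T P C^T N x.
\]
Since $N$ and $P$ are real and diagonal (hence self-transpose) and each of these bilinear forms is a scalar equal to its own transpose, one has $x^T N C^T P x = x^T P C N x$ and $x^T P C^T N x = x^T N C P x$. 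The cross terms in the two bounds are therefore negatives of one another, so adding the two inequalities gives $2S^-$ on the right, and dividing by $2$ is the claim.

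I do not anticipate a genuine obstacle: once one recognizes that the right way to tame the indefinite expression $x^T P C N x - x^T N C P x$ is to symmetrize over $U \leftrightarrow U^\dagger$, the rest is bookkeeping. The only subtle point is verifying $C' = C^T$ carefully via the complex-conjugate identity $v_i^\dagger U^\dagger v_j = \overline{v_j^\dagger U v_i}$, rather than conflating adjoint with transpose in the complex setting.
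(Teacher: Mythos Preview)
Your proposal is correct and follows essentially the same route as the paper: both arguments hinge on the observation that the doubly stochastic matrix associated to $U^\dagger$ is $C^T$, so that the cross terms $x^T PCN x - x^T NCPx$ and $x^T PC^T Nx - x^T NC^T Px$ are negatives of one another and vanish upon averaging. The only cosmetic difference is that the paper first forms the symmetric average $C'' = \tfrac{1}{2}(C+C^T)$ and then observes $x^T NC''Px = x^T PC''Nx$, whereas you apply the lemma twice and add; the content is identical.
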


\begin{proof}
Let $C=(c_{ij})$ and $C'=(c'_{ij})$ be the matrices defined by
\[
c_{ij} = v_i^\dagger U v_j v_j^\dagger U^\dagger v_i \quad\mbox{and}\quad
c'_{ij} = v_j^\dagger U^\dagger v_i v_i^\dagger U v_j.
\]
Inspection of the entries $c_{ji}$ and $c'_{ij}$ show that they are equal, implying that $C'=C^T$.  Therefore the matrix $C''=\frac{1}{2}C + \frac{1}{2}C'$ is symmetric and doubly-stochastic.

By arguing as in the theorem above, we obtain
\[
\frac{1}{2} \tr( |A| U (-A) U^\dagger) + \frac{1}{2} \tr( |A| U^\dagger (-A) U)  
\le 
x^T NC'' N x - x^T NC'' P x + x^T PC'' N x.
\]
Since $C''$ is symmetric, the terms $x^T PC'' N x$ and $x^T NC'' P x$ are equal.  This is seen by observing that
\begin{eqnarray*}
x^T NC'' P x & = & (x^T NC '' P x)^T 
 = 
x^T P^T {C''}^T N^T {(x^T)}^T \\
& = &
x^T PCN x.
\end{eqnarray*}
Therefore, the corresponding two terms cancel each other out and we are left with $S^-$ as upper bound. 
\end{proof}

%
%

\begin{proof}(Theorem~\ref{thm:weakerW})
Corollary~\ref{cor:conversion} implies that there exist $c-1$ diagonal unitary matrices $U_s$ such that
\begin{equation}\label{eq:conversion}
\frac{1}{2} \sum_{s=1}^{c-1}  U_s (-W*A) U_s^\dagger + U_s^\dagger (-W*A) U_s  = W*A.
\end{equation}

To abbreviate notation, we set $B=W*A$, $S^+=S^+(W*A)$, and $S^-=S^-(W*A)$. 
We multiply eq.~(\ref{eq:conversion}) by $|B|$ from the left and take the trace of both sides.  We obtain the following sequence of equalities:
\begin{eqnarray*}
\tr(|B| B)
& = &
\frac{1}{2} \tr \Big( |B|  \sum_{s=1}^{c-1} U_s (-B) U_s^\dagger + U_s^\dagger (-B) U_s \Big) \\
S^+ - S^-  
& = & 
\frac{1}{2} \sum_{s=1}^{c-1} \tr \Big( |B|  U_s (-B) U_s^\dagger \Big) + \tr\Big( |B| U_s^\dagger (-B) U_s \Big)  \\
S^+ 
& = & 
S^- +  \frac{1}{2} \sum_{s=1}^{c-1} \tr \Big( |B|  U_s (-B) U_s^\dagger \Big) + \tr\Big( |B| U_s^\dagger (-B) U_s \Big).
\end{eqnarray*}
Now we can use Corollary~\ref{cor:selfadjoint} to obtain the upper bound
\[
\frac{1}{2} \tr \Big( |B|  U_s (-B) U_s^\dagger \Big) + \frac{1}{2} \tr\Big( |B| U_s^\dagger (-B) U_s \Big) \le S^-
\]
for all $s=1,\ldots,c-1$.  Both arguments lead to the desired upper bound $S^+ \le c S^-$.
\end{proof}

%
%

\begin{remark}
We now briefly explain why it is not possible to prove the conjectured lower bound for all graphs using the current approach.  The natural idea to prove a bound stronger than the weaker bound is to keep the $-x^T P C P x$ terms, which are dropped in Lemma~\ref{lem:lowerBoundMatrixExpr} and Corollary~\ref{cor:selfadjoint}.

Let $C_s$ be the doubly stochastic matrices that arise when we conjugate by $U_s$ for $s=1,\ldots,c-1$.  Unfortunately, it turns out that for the special case $K_n$, the corresponding term $\sum_{s=1}^{c-1} x^T P C_s P x$ is equal to zero.  Therefore the additional ``undesired'' $S^-$ term cannot be offset by $-\sum_{s=1}^{c-1} x^T P C_s P x$.
\end{remark}

\section{Evidence for the conjectured lower bound $\chi\ge 1 + S^+/S^-$}\label{sec:evidence}

We start by describing how the conjectured lower bound is related to known results in spectral graph theory.  We describe a hierarchy of lower bounds starting from the weakest and ending at the conjectured lower bound.

{\sc Myers} and {\sc Liu} \cite{ML:72} proved the following degree-based bound
\[
\omega \ge 1 + \frac{2m}{n^2-2m}.
\]
on the clique number $\omega$. In 1972, {\sc Cvetkovic} \cite{Cvetkovic72} proved that
\[
\chi \ge 1 + \frac{\mu_1}{n-\mu_1}.
\]
{\sc Wilf} \cite{Wilf86} proved that this bound is in fact a lower bound for the clique number and therefore implies the concise Tur\'an theorem. 
In 1983, {\sc Edwards} and {\sc Elphick} proved that
\[
\chi \ge 1 + \frac{\mu_1^2}{2m-\mu_1^2} 
\]
and conjectured that $\chi$ can be replaced by the clique number $\omega$ \cite{EE83,Elphick81}.  {\sc Nikiforov} proved this conjecture in \cite{Nikiforov02} and generalized it by replacing $\mu_1^2$ by $\mu_1^r$ and $2m$ with the number of $r$-walks in $G$ \cite{Nikiforov06}. 

{\sc Bollobas} and {\sc Nikiforov} conjectured that
\[
\omega \ge 1 + \frac{\mu_1^2 + \mu_2^2}{2m - \mu_1^2 - \mu_2^2}
\]
for non-complete graphs \cite{BN07}.  This conjecture is exact for complete bipartite and complete regular $q$-partite graphs, since $\mu_2=0$ for these graphs.

Note that it is not possible to replace the chromatic number with the clique number in Conjecture~\ref{conj} because, for example, the Coxeter graph provides a counter-example. {\sc Smith} \cite{Smith69} has proved that $\mu_2>0$ for all connected graphs other than complete multipartite graphs, so Conjecture~\ref{conj} is an improvement on the result due to {\sc Edwards} and {\sc Elphick} for all such graphs.   

We have not been able to prove Conjecture~\ref{conj}. In addition to proving the weaker bound in the previous section, we are able to identify four additional types of evidence in support of the conjecture.  These are as follows:
 
First, we prove that $n - \alpha + 1 \ge 1 + S^+/S^-$, where $\alpha$ is the independence number of a graph, and it is well known that 
$n - \alpha + 1\ge \chi$.
\begin{theorem}
We have
\[
n - \alpha \ge
\frac{S^+}{S^-}.
\]
\end{theorem}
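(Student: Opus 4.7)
The plan is to prove the chain $S^+/S^- \le \nu \le n - \alpha$, where $\nu$ is the number of strictly negative eigenvalues of $A$; the theorem then follows from the second inequality. Neither piece uses the conversion technique of Section~\ref{sec:genHoffman} -- both are consequences of classical linear algebra (Cauchy interlacing and Cauchy--Schwarz), so the argument should be quite short.

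First I would establish $\nu \le n - \alpha$. Let $I \subseteq V$ be an independent set of size $\alpha$, so the principal submatrix $A[I]$ is the $\alpha \times \alpha$ zero matrix. Cauchy's interlacing theorem then gives
\[
\mu_i(A) \ge \mu_i(A[I]) \ge \mu_{i + n - \alpha}(A) \qquad \text{for } i = 1, \ldots, \alpha,
\]
and since $A[I] = 0$ has only zero eigenvalues, this forces $\mu_{n-\alpha+1}(A) \le 0$. Hence at most $n - \alpha$ eigenvalues of $A$ are strictly negative.

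Next I would establish $S^+/S^- \le \nu$ directly from the spectrum. Set $T = \sum_{i=1}^{\pi} \mu_i$; since $\tr(A) = 0$, we also have $T = \sum_{i = n - \nu + 1}^{n} |\mu_i|$. Cauchy--Schwarz applied to $(|\mu_{n - \nu + 1}|, \ldots, |\mu_n|)$ against the all-ones vector in $\R^{\nu}$ gives $T^2 \le \nu S^-$. On the positive side, $\mu_i \le \mu_1$ for $i = 1, \ldots, \pi$ yields
\[
S^+ \;=\; \sum_{i=1}^{\pi} \mu_i \cdot \mu_i \;\le\; \mu_1 \sum_{i=1}^{\pi} \mu_i \;=\; \mu_1 T.
\]
Combining these bounds and using $T \ge \mu_1$ (the term $\mu_1$ is one summand of $T$ and the remaining positive eigenvalues are non-negative) produces $S^+/S^- \le (\mu_1 T)/(T^2/\nu) = \mu_1 \nu / T \le \nu$.

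I do not expect a real obstacle; the two inequalities chain directly to $S^+/S^- \le \nu \le n - \alpha$. What is worth highlighting is that both steps are simultaneously tight at $K_n$ (where $\mu_1 = n-1 = T$, $\nu = n-1 = n-\alpha$, and $S^+/S^- = n-1$), which is why this route cannot by itself be sharpened to recover the full conjecture $\chi \ge 1 + S^+/S^-$.
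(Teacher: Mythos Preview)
Your approach is essentially the paper's own: it too bounds $S^+\le \mu_1 T$, uses Cauchy--Schwarz for $T^2\le \nu S^-$, invokes $\nu\le n-\alpha$ (which it attributes to Cvetkovi\'c rather than reproving via interlacing), and finishes with $\mu_1\le T$. One slip to fix: from $\mu_{n-\alpha+1}(A)\le 0$ you cannot conclude $\nu\le n-\alpha$ (that inequality gives $\pi\le n-\alpha$); instead use the left half of your interlacing display, $\mu_\alpha(A)\ge \mu_\alpha(A[I])=0$, which shows the first $\alpha$ eigenvalues are non-negative and hence $\nu\le n-\alpha$.
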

\begin{proof}  
Cvetkovic noted that $\pi \le n - \alpha$ and that $\nu \le n - \alpha$ \cite{GR01} Let $T=\mu_1+\ldots+\mu_\pi=-\mu_{n-\nu+1}-\ldots-\mu_n$ is $\frac{1}{2}$ times the energy $E=\tr(|A|)$.  Observe that
\[
S^+ = \mu_1^2 + \ldots + \mu_\pi^2 \le \mu_1(\mu_1+\ldots+\mu_\pi) = \mu_1 T,  
\]
with equality when $\mu_2=0$.  Conversely, using the Cauchy-Schwartz inequality we obtain
\begin{eqnarray*}
T^2 
& = & 
(\mu_{n-\nu+1} + \ldots + \mu_n)^2 \le \nu \cdot (\mu_{n-\nu+1}^2 + \ldots +\mu_n^2) \\
& = & 
\nu \cdot S^- \le (n-\alpha) S^-
\end{eqnarray*}
with equality when $A$ has $(n-\alpha)$ eigenvalues equal to $-T/(n-\alpha)$.

Therefore:
\[
\frac{S^+}{S-} \le  \frac{T\mu_1}{T^2/(n-\alpha)} = \frac{(n-\alpha)\mu_1}{T} \le n-\alpha.
\]
\end{proof}

Secondly, the conjecture is exact for several graph families, including all bipartite, complete and complete regular $q$-partite graphs. For example, any bipartite graph has a spectrum which is symmetric about zero. Therefore for all bipartite graphs: $S^+ = S^-$ and consequently $1 + S^+/S^- = 2$ when $\chi = 2$.  The proofs for complete and complete regular $q$-partite graphs are straightforward.

Thirdly, we have proved the conjecture is correct for strongly regular, complete $q$-partite and Regular Two-graphs and for Kneser graphs $KG_{p,k}$ for $k \le 4$.   

Finally we have conducted various searches for a counter-example. The Wolfram Mathematica 8.0 function \texttt{GraphData[n]} lists named graphs on $n$ vertices. We have searched the thousands of such graphs with $n\le 50$ and found no counter-examples.  Wilf proved the well known upper bound that $\chi \le 1 + \mu_1$. Therefore if there exists a graph for which $S^+/S^- > \mu_1$, this would provide a counter-example. {\sc Godsil} has tested this inequality against all $274,668$ graphs on $9$ vertices using Sage and found no counter-examples \cite{Godsil12}. The conjecture performs particularly well for small, dense random graphs of the form $G_{n,p}$, where $n$ is the number of vertices and $p$ is the independent probability of each edge being present. We have therefore used the Wolfram function \texttt{RandomGraph[n,p]} to generate over $100$ graphs with $n = 10$ and $p = 0.85$ or $0.9$ and again found no counter-examples.


\section{Empirical performance of the bounds}\label{sec:performance}

The best known lower bound for $\chi$ is the {\sc Hoffman} bound. Unlike the {\sc Myers} and {\sc Liu}, {\sc Cvetkovic}, and {\sc Edwards} and {\sc Elphick} bounds, the {\sc Hoffman} bound is not a lower bound for the clique number $\omega$. We have therefore compared the performance of Conjecture~\ref{conj} and Theorem~\ref{thm:genHoffman} with the performance of the {\sc Hoffman} bound. We have focused on the performance of Conjecture~\ref{conj} rather than of Theorem~\ref{thm:weaker} ($\chi\ge S^+/S^-)$ because we regard Theorem~\ref{thm:weaker} as interesting rather than useful. Theorem~\ref{thm:weaker} is occasionally better than the {\sc Hoffman} bound, but we have not been able to find a graph for which Theorem~\ref{thm:weaker} exceeds the clique number. We hope that Theorem~\ref{thm:weaker} will become a stepping stone to a proof of Conjecture~\ref{conj}, rather than a significant result in its own right. Theorem~\ref{thm:genHoffman} is a generalization of the {\sc Hoffman} bound and so can never be worse than the {\sc Hoffman} bound. 
We have not performed the maximization over $W$ in Theorem~\ref{thm:genHoffmanW} and Theorem~\ref{thm:weakerW}. 

We have made comparisons using both named and random graphs in Wolfram Mathematica 8.0. The results are set out below.  


\subsection{Named graphs}    

The Wolfram function \texttt{GraphData[n]} generates parameters for named graphs on $n$ vertices. For example, there are $78$ named graphs on 16 vertices, excluding the complete, empty and bipartite graphs. Tabulated below are the numbers of such named graphs on $16$, $25$ and $28$ vertices and the percentages of these graphs for which Theorem~\ref{thm:genHoffman} and Conjecture~\ref{conj} exceed the {\sc Hoffman} bound:  

\[
\begin{array}{ccccc}
	n		& \mbox{named graphs} & \mbox{Theorem~\ref{thm:genHoffman}} & \mbox{Conjecture~\ref{conj}} \\
	16		&	78				& 15\%			 & 22\% \\
	25		&	30				& 7\%			 & 13\% \\
	28		&	27				& 15\%			 & 19\% 
\end{array}
\]
An example of a graph for which the new bounds perform well is Barbell(8), for which the Hoffman bound is $4.8$, Theorem~\ref{thm:genHoffman} is $5.9$, Conjecture~\ref{conj} is $7.3$ and the chromatic number is $8$.

Theorem~\ref{thm:genHoffman} and Conjecture~\ref{conj} tend to perform particularly well for graphs that are nearly disconnected.

 
\subsection{Random Graphs}

The Wolfram function \texttt{RandomGraph[n,p]} generates a random graph $G_{n,p}$ on $n$ vertices with each edge being present with independent probability $p$. Eigenvalues are found using the function \texttt{Spectrum}, provided the Wolfram package ``Combinatorica'' has been loaded. Theorem~\ref{thm:genHoffman} almost never exceeds the {\sc Hoffman} bound for random graphs, because for almost all random graphs $\mu_1 \gg \mu_2$, and consequently generalizing over more eigenvalues than {\sc Hoffman} does not improve the bound. 
 
Tabulated below is the performance of Conjecture~\ref{conj} against the {\sc Hoffman} bound for each combination of $n = 20$ and $50$ and $p = 0.5$, $0.7$ and $0.9$, in each case averaged over $15$ graphs. We have included a comparison with the 1988 result due to {\sc Bollobas} \cite{Bollobas88} that the chromatic number of almost every random graph $G_{n,p}$ is: $q = (1/2 +o(1))n/\log_b(n)$, where $b = 1/(1-p)$.
 \[
 \begin{array}{ccccc} 
n		& p		& \mbox{Hoffman Bound}	& \mbox{Conjecture~\ref{conj}} & \mbox{Bollobas} \\
20 		& 0.5	&		3.3				& 2.9		& 2.3 \\
20		& 0.7	&		4.3				& 4.2		& 4.0 \\
20 		& 0.9	&		6.3				& 8.2		& 7.7 \\
50		& 0.5	&		4.5				& 3.2		& 4.4 \\
50 		& 0.7	&		6.2				& 4.9		& 7.7 \\
50		& 0.9	&		9.9 				& 10.8		& 14.7 
\end{array}	
 \]
It can be seen that for $n = 20$ both bounds exceed the {\sc Bollobas} formula for varying levels of $p$, because for low levels of $n$ the $o(1)$ term is material.   
The main conclusion is that the performance of Conjecture~\ref{conj} is strongly affected by graph density. For sparse graphs with $p = 0.5$ the {\sc Hoffman} bound is almost always better, irrespective of $n$. For dense graphs the position is more complex. With $p = 0.9$,  Conjecture~\ref{conj} usually exceeds {\sc Hoffman} for $n$ less than about $65$ but is worse than {\sc Hoffman} for $n$ greater than about $65$. 
The range of values of both bounds is fairly small. With a sample size of 15, for n = $50$ and $p = 0.9$ the range of the {\sc Hoffman} bound was $9.1$--$11.3$ and the range for Conjecture~\ref{conj} was $9.6$--$11.5$.      	


\section{Conclusions}

Most spectral bounds in graph theory involve a small number of eigenvalues. In this paper we have investigated two new lower bounds for the chromatic number, which involve all eigenvalues of the adjacency matrix. The principal open question raised by the paper is whether Conjecture~\ref{conj} is true. If it is then it provides an unexpected relationship between the sign of the eigenvalues of the adjacency matrix of a graph and its chromatic number. This relationship does not apply to the clique number.

Underpinning our new bounds is Theorem~\ref{thm:conversion}, which is a new characterization of a $\chi$-chromatic graph. 

{\sc Bilu} has proved that the {\sc Hoffman} bound is also a lower bound for the vector chromatic number \cite{Bilu06}. We have shown that the generalized Hoffman bound is also a lower bound on the normalized orthogonal rank.  The exact relationship between the vector chromatic number and the normalized orthogonal rank is an open question.

Finally we have not considered how to efficiently maximize over $W$ in Theorems~\ref{thm:genHoffmanW} and \ref{thm:weakerW}, but the paper by {\sc Barnes} \cite{Barnes01} provides an indication on how to proceed.


\section*{Acknowledgements}
We would like to thank Dominik Janzing, Giannicola Scarpa and Simone Severini for helpful discussions.
P.W. gratefully acknowledges the support from the National Science Foundation CAREER Award CCF-0746600.  This work was supported in part by the National Science Foundation
Science and Technology Center for Science of Information, under
grant CCF-0939370.

\bibliographystyle{amsplain}

\end{document}